\title{Two properties of the partial theta function}
\author{Vladimir Petrov Kostov\\ 
Universit\'e C\^ote d'Azur, CNRS, LJAD, France,\\  
e-mail: vladimir.kostov@unice.fr} 
\date{}
\newtheorem{tm}{Theorem}
\newtheorem{rem}[tm]{Remark}
\newtheorem{rems}[tm]{Remarks}
\newtheorem{lm}[tm]{Lemma}
\newtheorem{nota}[tm]{Notation}
\begin{document} 
\maketitle 
\begin{abstract}
  For the partial theta function $\theta (q,z):=\sum _{j=0}^{\infty}q^{j(j+1)/2}z^j$,
  $q$, $z\in \mathbb{C}$, $|q|<1$, 
  we prove that its zero set is connected. This set is smooth at every point
  $(q^{\flat},z^{\flat})$ such that $z^{\flat}$ is a simple or double zero of
    $\theta (q^{\flat},.)$. For $q\in (0,1)$, $q\rightarrow 1^-$ and $a\geq e^{\pi}$, there are $o(1/(1-q))$ and $(\ln (a/e^{\pi}))/(1-q)+o(1/(1-q))$ real zeros of $\theta (q,.)$ in the intervals $[-e^{\pi},0)$ and $[-a,-e^{-\pi}]$ respectively (and none in $[0,\infty)$). For $q\in (-1,0)$, $q\rightarrow -1^+$ and $a\geq e^{\pi /2}$, there are $o(1/(1+q))$ real zeros of $\theta (q,.)$ in the interval $[-e^{\pi /2},e^{\pi /2}]$ and $(\ln (a/e^{\pi /2})/2)/(1+q)+o(1/(1+q))$ in each of the intervals $[-a,-e^{\pi /2}]$ and $[e^{\pi /2},a]$.

{\bf Keywords:} partial theta function; separation in modulus; limit density of the real zeros\\  

{\bf AMS classification:} 26A06
\end{abstract}

\section{Introduction}
\subsection{Definition of the partial theta function}

For $q\in \mathbb{D}_1$, $z\in \mathbb{C}$, where $\mathbb{D}_r$ stands for
the open disk of radius $r$ centered at $0\in \mathbb{C}$, one defines the
{\em partial theta function} by the formula

\begin{equation}\label{eqdefi}
  \theta (q,z):=\sum _{j=0}^{\infty}q^{j(j+1)/2}z^j~.
\end{equation}
This terminology is explained
by the resemblance of formula (\ref{eqdefi}) with the one defining the
{\em Jacobi theta function} $\Theta (q,z):=\sum _{j=-\infty}^{\infty}q^{j^2}z^j$;
the word ``partial'' refers to the summation in the case of $\theta$ taking
place only over the nonnegative values of $j$. One has
$\theta (q^2,z/q)=\sum _{j=0}^{\infty}q^{j^2}z^j$. We consider $q$ as a parameter
and $z$ as a variable. For each $q$ fixed, $\theta (q,.)$ is an entire
function.

The function $\theta$ has been studied as Ramanujan-type series in~\cite{Wa}.
Its applications in statistical physics and combinatorics are explained
in~\cite{So}. Other fields, where $\theta$ is used, are the theory of
(mock) modular forms (see~\cite{BrFoRh}) and asymptotic analysis
(see~\cite{BeKi}). Asymptotics, modularity and other properties of partial
and false theta functions are considered in \cite{CMW} with regard to conformal
field theory and representation theory, and in \cite{BFM} when asymptotic
expansions of regularized characters and quantum dimensions of the
$(1,p)$-singlet algebra modules are studied.

A recent domain of interest for $\theta$ (in the case when the parameter
$q$ is real) is the theory of
{\em section-hyperbolic polynomials}, i.e. real univariate polynomials of
degree $\geq 2$ with all roots real negative and such that, when
their highest-degree monomial is deleted, this gives again a polynomial
having only real negative roots. The classical results of Hardy, Petrovitch
and Hutchinson in this direction (see~\cite{Ha}, \cite{Pe} and \cite{Hu})
have been continued in \cite{Ost}, \cite{KaLoVi} and~\cite{KoSh}. Various
analytic properties of $\theta$ are studied in \cite{Ko1}, \cite{Ko2},
\cite{Ko3}, \cite{Ko4}, \cite{Ko5}, \cite{Ko6}, \cite{Ko7}, \cite{Ko8},
\cite{Ko9}, \cite{Ko10}, \cite{Ko11}, \cite{Ko12} and \cite{Ko13}.
See more about $\theta$ in~\cite{AnBe}.

\subsection{The zero set and the spectrum of the partial theta function}

In the present paper we consider the zero set of $\theta$, i.e. the set
$S:=\{ (q,z)\in \mathbb{D}_1\times \mathbb{C}$, $\theta (q,z)=0\}$. In
Section~\ref{secpr} we prove the following theorem:

\begin{tm}\label{maintm}
  The set $S$ is a connected analytic subset of
  $\mathbb{D}_1\times \mathbb{C}$. It is smooth at every point
  $(q^{\flat},z^{\flat})$ such that $z^{\flat}$ is a simple or double zero of
  $\theta (q^{\flat},.)$.
\end{tm}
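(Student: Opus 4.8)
The plan is to separate the formal part (analyticity and smoothness) from the genuine difficulty (connectedness). Since the double series defining $\theta$ converges locally uniformly on $\mathbb{D}_1\times\mathbb{C}$, the function $\theta$ is holomorphic there and $S=\theta^{-1}(0)$ is by definition an analytic subset; moreover $\theta(q,0)=\theta(0,z)=1$ forces $S\subset(\mathbb{D}_1\setminus\{0\})\times(\mathbb{C}\setminus\{0\})$. For smoothness I would use the holomorphic implicit function theorem: $S$ is a one-dimensional complex submanifold at every point where the differential $d\theta=\partial_q\theta\,dq+\partial_z\theta\,dz$ is nonzero. If $z^{\flat}$ is a simple zero of $\theta(q^{\flat},\cdot)$ then $\partial_z\theta(q^{\flat},z^{\flat})\neq0$ and smoothness is immediate (locally $z=z(q)$). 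The only case needing work is a double zero, where $\partial_z\theta(q^{\flat},z^{\flat})=0$ while $\partial_z^2\theta(q^{\flat},z^{\flat})\neq0$, and one must rule out $d\theta=0$ by showing $\partial_q\theta(q^{\flat},z^{\flat})\neq0$.

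Here the device I would use is a first-order linear PDE satisfied by $\theta$. Writing $T_j=j(j+1)/2$ and differentiating the series termwise, one checks
\[ q\,\partial_q\theta=\frac{z^2}{2}\,\partial_z^2\theta+z\,\partial_z\theta, \]
since $\frac{j(j+1)}{2}=\frac{j(j-1)}{2}+j$ matches the coefficients of $\frac{z^2}{2}\partial_z^2\theta$ and $z\partial_z\theta$. Evaluating this identity at a double zero kills the $z\,\partial_z\theta$ term and leaves $q^{\flat}\,\partial_q\theta(q^{\flat},z^{\flat})=\frac{(z^{\flat})^2}{2}\partial_z^2\theta(q^{\flat},z^{\flat})$, whose right-hand side is nonzero because $z^{\flat}\neq0$ and $\partial_z^2\theta(q^{\flat},z^{\flat})\neq0$; as $q^{\flat}\neq0$ this yields $\partial_q\theta(q^{\flat},z^{\flat})\neq0$, hence $d\theta\neq0$ and smoothness (now realised as a graph $q=q(z)$). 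As a bonus, at such a point $q-q^{\flat}\sim c\,(z-z^{\flat})^2$, so the projection $\pi:(q,z)\mapsto q$ has a simple ramification there.

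For connectedness I would regard $\pi:S\to\mathbb{D}_1\setminus\{0\}$ as a branched covering: for $q\neq0$ the fibre is the zero set of the entire function $\theta(q,\cdot)$, which has order $0$ and is not a polynomial, hence has infinitely many zeros; the branch points are the spectral values carrying multiple zeros. For $|q|$ small the zeros satisfy $z_k(q)\sim -q^{-k}$, so they have pairwise distinct moduli and are single-valued, the monodromy around $0$ is trivial, and near the puncture $S$ is a disjoint union of sheets indexed by $k\in\mathbb{N}$. Thus connectedness is equivalent to transitivity of the monodromy generated by loops around the interior branch points. I would first connect an arbitrary point of $S$ to the real slice by continuing its $z$-coordinate along a generic $q$-path ending at a small real $q_r\in(0,\tilde q_1)$, with $\tilde q_1$ the first spectral value, where all zeros are real negative and simple and form disjoint arcs $\xi_1(q)>\xi_2(q)>\cdots$; then I would connect these arcs to one another by encircling the spectral values $\tilde q_1<\tilde q_2<\cdots$, at each of which two adjacent real zeros collide into a double zero, a simple ramification point by the previous paragraph, so that the corresponding loop transposes exactly those two sheets.

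The main obstacle is then the combinatorial–analytic statement that these transpositions generate a transitive group, equivalently that the ``collision graph'' on $\{z_k\}$ is connected. Establishing that every adjacent pair $(\xi_k,\xi_{k+1})$ actually merges at some spectral value, so that all adjacent transpositions $(k\ \ k{+}1)$ occur, is where I expect the real difficulty, and where I would rely on the detailed description of the spectrum in the earlier papers \cite{Ko1,Ko2,Ko3}. Granting this, the monodromy is transitive, the locus of simple zeros (the unramified part of the covering) is connected, and $S$, being its closure, is connected.
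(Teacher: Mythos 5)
Your treatment of analyticity and smoothness is essentially identical to the paper's: the same differential equation $2q\,\partial_q\theta=2z\,\partial_z\theta+z^2\,\partial_z^2\theta$, evaluated at a double zero (where necessarily $q^{\flat}\neq 0\neq z^{\flat}$), yields $\partial_q\theta(q^{\flat},z^{\flat})\neq 0$, hence a nonzero gradient and smoothness. Your reduction of connectedness to transitivity of the monodromy (using separation in modulus near $q=0$, so that near the puncture $S$ splits into graphs $\xi_k(q)\sim -q^{-k}$, and closing up from the simple-zero locus) also matches the paper's skeleton.

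The gap is exactly where you located the ``real difficulty'', and it cannot be closed the way you hope. You want every adjacent pair $(\xi_k,\xi_{k+1})$ to collide at some spectral value $\tilde q_j\in(0,1)$, so that all adjacent transpositions occur. This is false: by the description of the positive real spectrum (part (2) of Remarks~\ref{rems1}, from \cite{Ko2}), when $q$ increases through $\tilde q_j$ the colliding pair is precisely $(\xi_{2j-1},\xi_{2j})$, and once a pair has become complex it never returns to the real axis for larger $q\in(0,1)$. Hence the collision graph produced by the whole positive spectrum is the perfect matching $\{1,2\},\{3,4\},\{5,6\},\dots$; the loops around the $\tilde q_j$ generate only the pairwise disjoint transpositions $\xi_{2j-1}\leftrightarrow\xi_{2j}$, whose orbits are the two-element sets $\{2j-1,2j\}$. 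No finer knowledge of the spectrum on $(0,1)$ can repair this, since the pairs $(\xi_{2j},\xi_{2j+1})$ simply never merge there; and the complex spectral values, about which almost nothing is known beyond existence of one conjugate pair (part (1) of Remarks~\ref{rems1ter}), give you no usable transpositions either.

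The paper's missing ingredient is the negative real spectrum $\bar q_k\in(-1,0)$, which your outline never invokes. There the collisions are staggered relative to the positive ones: at $\bar q_{2s-1}$ the zeros $\xi_{4s-2}$ and $\xi_{4s}$ collide, and at $\bar q_{2s}$ the zeros $\xi_{4s-1}$ and $\xi_{4s+1}$ collide (part (3) of Remarks~\ref{rems1bis}, from \cite{Ko7}). Concatenating loops $\gamma_j$ around the $\tilde q_j$ with loops $\delta_k$ around the $\bar q_k$ (joined by half-circles around $0$ so as to pass between the two halves of the real axis while avoiding the spectrum) produces the edges $\{4s-3,4s-2\}$, $\{4s-1,4s\}$, $\{4s-2,4s\}$, $\{4s-1,4s+1\}$, which chain together as $\xi_{4s-3}\to\xi_{4s-2}\to\xi_{4s}\to\xi_{4s-1}\to\xi_{4s+1}=\xi_{4(s+1)-3}$ and, iterating in $s$, put all indices in a single orbit. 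With this replacement for your false adjacency claim, the rest of your argument goes through as in the paper.
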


\begin{rems}\label{rems1}
  {\rm (1) B.~Z.~Shapiro has introduced the notion of {\em spectrum} of
    $\theta$ as the set of values of $q$ for which $\theta (q,.)$ has a
    multiple zero,
    see~\cite{KoSh}. Suppose that $q$ is real, i.e. $q\in (-1,0)\cup (0,1)$ (the case $q=0$ is of little interest since $\theta (0,z)\equiv 1$).
    If $q\in (0,1)$, then $\theta (q,.)$ has infinitely-many real zeros
    and they are all negative. 
    There are also infinitely-many spectral numbers
    $0<\tilde{q}_1<\tilde{q}_2<$ $\cdots$ $<\tilde{q}_k<\cdots <1$,
    $\lim _{k\rightarrow \infty}\tilde{q}_k=1^-$, 
    see~\cite{Ko2}.

    (2) For $q\in (0,\tilde{q}_1)$ (where $\tilde{q}_1=0.3092\ldots$), all
    zeros of $\theta (q,.)$ are real, negative and distinct:
    $\cdots <\xi _2<\xi _1<0$; one has $\theta (q,x)>0$ for $x\in (\xi _{2j+1},\xi _{2j})$ and $\theta (q,x)<0$ for $x\in (\xi _{2j},\xi _{2j-1})$. For
    $q\in (\tilde{q}_k,\tilde{q}_{k+1})\subset (0,1)$, $k\in \mathbb{N}$,
    $\tilde{q}_0:=0$,
    the function
    $\theta (q,.)$ has exactly $k$ pairs of complex conjugate zeros (counted
    with multiplicity). When $q\in (0,1)$ increases and passes through the
    spectral value $\tilde{q}_k$, the two zeros $\xi _{2k-1}$ and $\xi _{2k}$
    coalesce and then form a complex conjugate pair, see~\cite{Ko2}. The index
    $j$ of the zero $\xi _j$ is meaningful as long as $\xi _j$ is real, i.e.
    for $q\in (0,\tilde{q}_{[(j+1)/2]}]$, where $[.]$ stands
  for ``the integer part of''.

    (3) Asymptotic expansions of the
    numbers $\tilde{q}_k$ are
    proved in \cite{Ko3} and \cite{Ko13}. The formula of \cite{Ko13} reads:

    \begin{equation}\label{eqasympt1}
      \tilde{q}_k=1-\pi /2k+(\ln k)/8k^2+O(1/k^2)~~~\, ,~~~\,
      \tilde{y}_k=-e^{\pi }e^{-(\ln k)/4k+O(1/k)}
    \end{equation}
    where $e^{\pi }=23.1407\ldots$ and $\tilde{y}_k<0$ is the double zero of $\theta (\tilde{q}_k,.)$. It is
    the rightmost of its real zeros and $\theta (\tilde{q}_k,.)$
    has a minimum at~$\tilde{y}_k$.

    (4) For $k\in \mathbb{N}^*$, one has $\theta (q,-q^{-k})\in (0,q^k)$,
    see
    Proposition~9 in~\cite{Ko2}. For $q>0$
    small enough, one has sgn$(\theta (q,-q^{-k-1/2}))=(-1)^k$ and
    $|\theta (q,-q^{-k-1/2})|>1$, see Proposition~12 in~\cite{Ko2}.}
    \end{rems}

    \begin{rems}\label{rems1bis}
    {\rm (1) If $q\in (-1,0)$ is sufficiently small, then $\theta (q,.)$ has
      infinitely-many real negative and infinitely-many real positive zeros:
      $\cdots <\xi _4<\xi _2<0<\xi _1<\xi _3<\cdots$ ; one has $\theta (q,x)<0$ for $x\in (\xi _{4j+4},\xi _{4j+2})$ and $x\in (\xi _{4j+1},\xi _{4j+3})$,  and $\theta (q,x)>0$ for $x\in (\xi _{4j+2},\xi _{4j})$, $x\in (\xi _{4j+3},\xi _{4j+5})$ and $x\in (\xi _2,\xi _1)$. For $q\in (-1,0)$, there are also
      infinitely-many spectral numbers, see~\cite{Ko7}. 
    We denote them by
    $\bar{q}_k$, where $-1<\bar{q}_k<0$.  
    
    (2) For $s\geq 1$, one has $-1<\bar{q}_{2s+1}<\bar{q}_{2s-1}<0$, see
    Lemma~4.11 in \cite{Ko7}. For $k$ sufficiently
    large, one has $\bar{q}_{k+1}<\bar{q}_k$, see Lemmas~4.10, 4.11 and 4.17
    in~\cite{Ko7}. The inequality $\bar{q}_{k+1}<\bar{q}_k$ being proved only
    for $k$ sufficiently large we admit the possibility finitely-many
    equalities of the form $\bar{q}_i=\bar{q}_j$ to hold true, where at least
    one of the numbers $i$ and $j$ is even. 
    
(3) When $q\in (-1,0)$
    decreases and passes through a spectral value $\bar{q}_k$, then for
    $k=2s-1$ (resp. $k=2s$), $s\in \mathbb{N}^*$, the zeros
$\xi _{4s-2}$ and $\xi _{4s}$ (resp. $\xi _{4s-1}$ and $\xi _{4s+1}$) 
coalesce. Thus for 
    $q\in (\bar{q}_{k+1},\bar{q}_k)\subset (-1,0)$ and $k$ sufficiently large,
    the function
    $\theta (q,.)$ has exactly $k$ pairs of complex conjugate zeros (counted
    with multiplicity). The zero $\xi _1$ remains real and simple for any
    $q\in (-1,0)$. 

    (4) Asymptotic expansions of the
    numbers $\bar{q}_k$ are
    proved in \cite{Ko7}:

    \begin{equation}\label{eqasympt2}
      \bar{q}_k=1-(\pi /8k)+o(1/k)~~~\, ,~~~\, |\bar{y}_k|=e^{\pi /2}+o(1)~,
      \end{equation}
    where $\bar{y}_k$ is the double zero of $\theta (\bar{q}_k,.)$ and 
    $e^{\pi /2}=4.81477382\ldots$. For $k$ odd (respectively, $k$ even)
    $\theta (\bar{q}_k,.)$ has a local minimum (respectively, maximum) at
    $\bar{y}_k$, and $\bar{y}_k$ is the rightmost of the real negative zeros
    of $\theta (\bar{q}_k,.)$ (respectively, for $k$ sufficiently
    large, $\bar{y}_k$ is the second from the left of the real positive
    zeros of $\theta (\bar{q}_k,.)$).}
    \end{rems}

    \begin{rems}\label{rems1ter}
      {\rm (1) All coefficients in the series (\ref{eqdefi}) are real. Hence a
        priori
    spectral numbers are either real or they form complex conjugate pairs.
    It is proved in \cite{Ko8} that there exists at least one such pair which
    equals $0.4353184958\ldots \pm i0.1230440086\ldots$. Numerical results
    suggest that one should expect there to be infinitely-many such pairs.

    (2) In any set of the form $\mathbb{D}_r\setminus \{ 0\}$, $r\in (0,1)$,
    the number of spectral values of $\theta$ is finite (because the spectrum
    is locally a codimension $1$ analytic subset in
    $\mathbb{D}_1\setminus \{ 0\}$). For any spectral
    number $q$, the function $\theta (q,.)$ has finitely-many multiple zeros,
    see~\cite{Ko5}. The number $\tilde{q}_1=0.3092\ldots$ is the only spectral number of $\theta$ in the disk $\overline{\mathbb{D}_{0.31}}$.
    
    (3) For all spectral numbers $\tilde{q}_j\in (0,1)$ and, for $k$
    sufficiently large, for all spectral numbers $\bar{q}_k\in (-1,0)$,
    it is true that the function
    $\theta (\tilde{q}_j,.)$, resp. $\theta (\bar{q}_k,.)$, has exactly one
    double zero all its other zeros being
    simple (see \cite{Ko2} and \cite{Ko7}). It would be interesting to
    (dis)prove that this is the case of
    any spectral number. If true, this would mean in particular (see
    Theorem~\ref{maintm})
    that $S$ is globally smooth and connected. If false, it would be of
    interest to describe the eventual singularities of~$S$.}
\end{rems}

    \subsection{The limit distribution of the real zeros}

    In the present subsection we consider the case $q\in \mathbb{R}$, i.e.
    $q\in (-1,0)\cup (0,1)$.

    \begin{nota}\label{notalnp}
      {\rm For $q\in (-1,0)\cup (0,1)$, given a finite interval
        $J\subset \mathbb{R}$, we denote by $Z_J(q)$ the number of zeros of
        $\theta (q,.)$ (counted with multiplicity) belonging to $J$. For
        $q\in (0,1)$ and $a\geq e^{\pi}$, we set $\ell _a(q):=Z_{[-a,-e^{\pi}]}(q)$.
        For $q\in (-1,0)$ and $a\geq e^{\pi /2}$, we set
        $n_a(q):=Z_{[-a,-e^{\pi /2}]}(q)$ and $p_a(q):=Z_{[e^{\pi /2},a]}(q)$.}
      \end{nota}

    \begin{tm}\label{tm01}
      (1) For $q\in (0,1)$, one has $Z_{[-e^{\pi},0)}(q)=o(1/(1-q))$. 

        (2) The set of zeros of $\theta (\tilde{q}_k,.)$ (over all
        $k\in \mathbb{N}^*$) is everywhere dense in $(-\infty , -e^{\pi}]$.
      One has $\lim _{q\rightarrow 1^-}\ell _a(q)(1-q)=\ln (a/e^{\pi})$.

      (3) For $q\in (-1,0)$, one has $Z_{[-e^{\pi /2},0)}(q)=o(1/(1+q))$.  

        (4) The set of zeros of $\theta (\bar{q}_{2s-1},.)$ (over all
        $s\in \mathbb{N}^*$) is everywhere dense in $(-\infty , -e^{\pi /2}]$.
      One has
      $\lim _{q\rightarrow 1^-}n_a(q)(1+q)=\ln (a/e^{\pi /2})/2$.

      (5) For $q\in (-1,0)$, one has $Z_{(0,e^{\pi /2}]}(q)=o(1/(1+q))$. 

        (6) The set of zeros of $\theta (\bar{q}_{2s},.)$ (over all
        $s\in \mathbb{N}^*$) is everywhere dense in $[e^{\pi /2}, \infty )$.
      One has
      $\lim _{q\rightarrow 1^-}p_a(q)(1+q)=\ln (a/e^{\pi /2})/2$.
      \end{tm}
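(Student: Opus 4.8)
The plan is to reduce all six assertions to a single uniform asymptotic analysis of $\theta(q,\cdot)$ along the real axis as $q\to 1^-$ (for (1)--(2)) and as $q\to -1^+$ (for (3)--(6)). First I would treat $q\in(0,1)$, writing $q=e^{-\varepsilon}$ with $\varepsilon=-\ln q\to 0^+$, so that $1-q=\varepsilon+O(\varepsilon^2)$, and probing the negative axis by the substitution $z=-e^x$, $x=\ln|z|$. Then $\theta(q,-e^x)=\sum_{j\ge 0}(-1)^j e^{-\varepsilon j(j+1)/2+jx}$, and the logarithm of the $j$-th summand is a concave parabola in $j$ with maximum at $j_\ast=(x-\varepsilon/2)/\varepsilon\approx x/\varepsilon$. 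The key is that this alternating sum has two competing contributions of different nature: a \emph{boundary} contribution coming from the terms with $j$ near $0$, which I would extract by Boole's summation formula (the alternating analogue of Euler--Maclaurin) and which is $O(1)$ and slowly varying in $x$; and an \emph{interior saddle} contribution, which I would extract by Poisson summation and which equals $c\,\varepsilon^{-1/2}e^{(x^2-\pi^2)/(2\varepsilon)}\bigl(\cos(\pi x/\varepsilon+\varphi)+o(1)\bigr)$. The factor $e^{-\pi^2/(2\varepsilon)}$ is the Fourier damping of the alternating sign against the Gaussian of width $\varepsilon^{-1/2}$, and it is exactly this factor that makes the two contributions balance at $x=\pi$, i.e. at $|z|=e^{\pi}$.

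From this dichotomy the statements follow. For $x<\pi$ the saddle term is exponentially smaller than the bounded boundary term, so $\theta(q,-e^x)$ is a slowly varying $O(1)$ function of $x$ and changes sign only $O(1)$ times on any fixed subinterval; hence $Z_{[-e^{\pi},0)}(q)=O(1)=o(1/(1-q))$, which is (1). For $x>\pi$ the saddle term dominates and the zeros of $\theta(q,-e^x)$ are, to leading order, the sign changes of $\cos(\pi x/\varepsilon+\varphi)$, which are spaced by $\varepsilon$ in the variable $x$. Counting them on $x\in[\pi,\ln a]$ gives $(\ln a-\pi)/\varepsilon+O(1)$ sign changes, whence $\ell_a(q)=\ln(a/e^{\pi})/(1-q)+o(1/(1-q))$ and $\ell_a(q)(1-q)\to\ln(a/e^{\pi})$; the only point needing care is the transition layer near $x=\pi$, whose width is $O(\varepsilon\ln(1/\varepsilon))$ and which therefore contributes only $o(1/\varepsilon)$ zeros, so it does not affect the limit. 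This proves (2) apart from the everywhere-dense statement.

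The case $q\in(-1,0)$ is identical in spirit. Writing $q=-e^{-\delta}$, $1+q=\delta+O(\delta^2)$, the coefficient signs $(-1)^{j(j+1)/2}$ form the zero-mean period-four pattern $+,-,-,+$, whose dominant harmonics have frequency $\pi/2$ rather than $\pi$; for $z=-e^x$ the relevant pattern is instead $+,+,-,-$. Repeating the Poisson/Boole analysis replaces the damping $e^{-\pi^2/(2\varepsilon)}$ by $e^{-\pi^2/(8\delta)}$, moves the threshold to $x=\pi/2$ (i.e. $|z|=e^{\pi/2}$), and, because the oscillation now has period four in $j_\ast$, halves the density of sign changes to $1/(2\delta)$ per unit $x$. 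This yields (3) and (5) (bounded boundary term below threshold) and the limits $n_a(q)(1+q)\to\ln(a/e^{\pi/2})/2$ and $p_a(q)(1+q)\to\ln(a/e^{\pi/2})/2$ in (4) and (6). The everywhere-dense statements I would obtain by combining two facts: on any compact subinterval of $(-\infty,-e^{\pi}]$ the spacing of consecutive zeros of $\theta(\tilde q_k,\cdot)$ is $\sim(1-\tilde q_k)\to 0$, so the zeros become dense there; and the rightmost real zeros accumulate at the endpoint, since $\tilde y_k\to -e^{\pi}$ by (\ref{eqasympt1}). For the negative/positive half-axes in (4)/(6) the parity of the spectral index selects the side, exactly as recorded in Remarks~\ref{rems1bis}(4): $\bar y_{2s-1}\to -e^{\pi/2}$ sits on the negative axis and $\bar y_{2s}\to e^{\pi/2}$ on the positive one.

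The main obstacle is to make the saddle-point step uniform and rigorous rather than heuristic. Concretely, I must control the Poisson remainder uniformly for $x$ in a range that grows like $\ln a$ while $\varepsilon\to 0$, justify discarding the part of the Gaussian lying in $j<0$ (super-exponentially small because $j_\ast\varepsilon^{1/2}\to\infty$), and show that the two leading Poisson harmonics $n=0,1$ genuinely dominate the rest. Equally delicate is the transition layer near the threshold, where the boundary and saddle terms are comparable: there one must verify that the combined function still has the claimed number of simple sign changes up to an $o(1/\varepsilon)$ error, so that the \emph{exact} constants $\ln(a/e^{\pi})$ and $\ln(a/e^{\pi/2})/2$ (and not merely the orders of magnitude) come out correctly. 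Once these uniform estimates are in place, the counting and density conclusions are routine.
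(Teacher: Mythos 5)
Your route (Boole summation for the boundary terms plus Poisson summation for the interior saddle) is genuinely different from the paper's, which is entirely elementary: the paper localizes the real zeros in explicit intervals ($\xi_{2s-1},\xi_{2s}\in(-q^{-2s},-q^{-2s+1})$ in Lemma~\ref{lmrKr}, the intervals $I^{\bullet}$ in Lemma~\ref{lmrho}), iterates the functional equation (\ref{eqfunct}) along geometric sequences $x_s=y/q^s$ starting at the double zero to force sign alternation, and imports the spectral asymptotics (\ref{eqasympt1}), (\ref{eqasympt2}) from earlier work. Your heuristic does produce the correct thresholds $e^{\pi}$, $e^{\pi/2}$ and the correct constants, but as written it is a program rather than a proof, and it has two genuine gaps beyond the uniformity issues you yourself flag. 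First, counting \emph{sign changes} of the dominant oscillating factor bounds the number of real zeros only from \emph{below}; since the theorem asserts a limit, you also need the matching upper bound, i.e.\ that each crossing of $\cos(\pi x/\varepsilon+\varphi)$ accounts for exactly one zero of $\theta$ and that no extra pairs of real zeros (near-tangencies) occur. This requires control of the \emph{derivative} of the error term (so that $-\phi'\sin\phi$, of size $\asymp 1/\varepsilon$, dominates near each crossing and the function is monotone there), or an argument-principle count on a thin complex neighborhood; nothing in your write-up addresses this, whereas in the paper the upper bound comes for free from the interval localization of Lemmas~\ref{lmrKr} and~\ref{lmrho}.

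Second, your intermediate claim in part (1), $Z_{[-e^{\pi},0)}(q)=O(1)$, is actually false, and the failure is exactly the non-uniformity you postponed. The saddle amplitude $\varepsilon^{-1/2}e^{(x^{2}-\pi^{2})/(2\varepsilon)}$ becomes comparable to the $O(1)$ boundary term already at $x=\pi-\frac{\varepsilon\ln(1/\varepsilon)}{2\pi}+O(\varepsilon)$, so the oscillatory layer penetrates \emph{below} the threshold and deposits on the order of $\ln\frac{1}{1-q}$ real zeros inside $[-e^{\pi},0)$. This is consistent with the paper's formula (\ref{eqasympt1}): $|\tilde{y}_k|=e^{\pi}e^{-(\ln k)/4k+O(1/k)}<e^{\pi}$, so the double zero (and its neighbors) lie strictly inside $(-e^{\pi},0)$ for all large $k$. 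The conclusion $Z_{[-e^{\pi},0)}(q)=o(1/(1-q))$ survives, since $\ln\frac{1}{1-q}=o(1/(1-q))$, but your dichotomy ``below threshold the bounded boundary term dominates, hence $O(1)$ sign changes'' breaks down precisely in the window where these zeros live; any rigorous version of your argument must treat the transition layer in part (1) as well (and likewise in parts (3) and (5)), not only in the zero-count of part (2). Until the uniform Poisson estimate with derivative control is actually proved, and the layer is handled on both sides of the threshold, the proposal establishes neither the upper bounds nor, therefore, the stated limits.
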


    The theorem is proved in Section~\ref{sectm01}.

    \begin{rem}
      {\rm The quantity
        $1/a=\lim _{\varepsilon \rightarrow 0^+}((\ln ((a+\varepsilon )/e^{\pi})-
        \ln (a/e^{\pi })/\varepsilon )$ can be interpreted as limit density
        of the real zeros of $\theta (q,.)$ as $q\rightarrow 1^-$ at the
        point $-a\leq -e^{\pi}$. Similarly for the quantity $1/(2a)$ at
        $\pm a$, $a\geq e^{\pi /2}$, as $q\rightarrow -1^+$. For the rest of the
        real line the limit density is~0. Indeed, for $q\in (0,1)$, there are
        no nonnegative zeros of $\theta (q,.)$; for $0<a<e^{\pi}$, see part~(1)
        of Theorem~\ref{tm01}. For $q\in (-1,0)$, see parts (3) and (5) of
        Theorem~\ref{tm01}.}
      \end{rem}

\section{Proof of Theorem~\protect\ref{maintm}\protect\label{secpr}}

\subsection{Smoothness}

We prove the smoothness first. If $z^{\flat}$ is a simple zero of
$\theta (q^{\flat},.)$, then
$(\partial \theta /\partial z)(q^{\flat}, z^{\flat})\neq 0$ hence

\begin{equation}\label{eqgradnot0}
{\rm Grad}(\theta ) (q^{\flat},z^{\flat})\neq 0
\end{equation}
and $S$ is smooth at $(q^{\flat}, z^{\flat})$. The function $\theta$ satisfies the
following differential equation (see~(\ref{eqdefi})):

\begin{equation}\label{eqdiff}
  2q(\partial \theta /\partial q)=
  z(\partial ^2/\partial z^2)(z\theta )~.
\end{equation}
The right-hand side equals
$2z(\partial \theta /\partial z)+z^2(\partial ^2\theta /\partial z^2)$. If
$z^{\flat}$ is a double zero of $\theta (q^{\flat},.)$, then

$$\theta (q^{\flat},z^{\flat})=(\partial \theta /\partial z)(q^{\flat},z^{\flat})=0
\neq (\partial ^2\theta /\partial z^2)(q^{\flat},z^{\flat})~.$$
One has neither $q^{\flat}=0$, because $\theta (0,.)\equiv 1\neq 0$, nor
$z^{\flat}=0$, because $\theta (q,0)\equiv 1$.
Hence

$$(\partial \theta /\partial q)(q^{\flat},z^{\flat})=
((z^{\flat})^2/2q^{\flat})(\partial ^2\theta /\partial z^2)
(q^{\flat},z^{\flat})\neq 0~.$$
Therefore one
has (\ref{eqgradnot0}), so $S$ is smooth at $(q^{\flat},z^{\flat})$.

\subsection{Separation in modulus\protect\label{subsecsepar}}

For fixed $q\in \mathbb{D}_1\setminus \{ 0 \}$, we denote by $\mathcal{C}_k$,
$k\in \mathbb{N}^*$, the circumference in the $z$-space $|z|=|q|^{-k-1/2}$. When
$q$ is close to $0$, one can enumerate the zeros of $\theta$, because there
exists exactly one zero such that $\xi _k\sim -q^{-k}$ (see Proposition~10 in
\cite{Ko2}). For $0<|q|\leq c_0:=0.2078750206\ldots$, one has

\begin{equation}\label{eqsepar}
  |q|^{-k+1/2}<|\xi _k|<|q|^{-k-1/2}~,
\end{equation}
see Lemma~1 in \cite{Ko8}. In this sense we say that for
$q\in \mathbb{D}_{c_0}\setminus \{ 0\}$, the zeros of $\theta$ are separated in
modulus (that is, their moduli are separated by the circumferences
$\mathcal{C}_k$). We say that, for given $q$, {\em strong separation} of the
zeros of $\theta$ takes place for $k\geq k_0$, if for any $k\geq k_0$, there
exists exactly one zero $\xi _k$ of $\theta$ satisfying conditions
(\ref{eqsepar}).

Set $\alpha _0:=\sqrt{3}/2\pi =0.2756644477\ldots$. The following result can
be found in~\cite{Ko8}:

\begin{tm}\label{tmsepar}
  For $n\geq 5$ and for $|q|\leq 1-1/(\alpha _0n)$, strong separation of the
  zeros of $\theta$ takes place for $k\geq n$.
\end{tm}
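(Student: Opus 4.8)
The plan is to reduce the statement to a winding-number computation after normalising the circle $\mathcal{C}_k$ to the unit circle. First I would perform the substitution $u=q^{k+1/2}z$, which sends $\mathcal{C}_k$ (i.e. $|z|=|q|^{-k-1/2}$) to the unit circle $|u|=1$ and moves the dominant monomial $j=k$ to index $0$. Comparing exponents gives $\theta(q,z)=q^{-k^2/2}\tilde h_k(u)$ with $\tilde h_k(u)=\sum_{l\geq 0}q^{(l-k)^2/2}u^l$; on $\mathcal{C}_k$ the $l$-th term of $\tilde h_k$ has modulus exactly $|q|^{(l-k)^2/2}$, sharply peaked at $l=k$. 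Since $|z|<|q|^{-k-1/2}$ corresponds to $|u|<1$ and the map is a mere rescaling, strong separation for $k\geq n$ becomes the assertion that $\tilde h_k$ has exactly $k$ zeros in $|u|<1$ for every such $k$: if $N_k$ denotes this number, then the annulus $|q|^{-k+1/2}<|z|<|q|^{-k-1/2}$ contains $N_k-N_{k-1}$ zeros, so establishing $N_k=k$ throughout the admissible range yields precisely one zero per annulus.

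Next I would factor $\tilde h_k(u)=u^k G_k(u)$ with $G_k(u)=\sum_{m\geq -k}q^{m^2/2}u^m$, so that on $|u|=1$ the winding number of $\tilde h_k$ equals $k$ plus the winding number of $G_k$; by the argument principle $N_k=k$ is therefore equivalent to $G_k$ having winding number $0$ on the unit circle. The natural reference is the Jacobi theta function: summing over all $m\in\mathbb{Z}$ gives $\sum_{m}q^{m^2/2}u^m=\Theta(q^{1/2},u)$, and for real $q\in(0,1)$ this restricts on $|u|=1$ to $\Theta(q^{1/2},e^{i\phi})=1+2\sum_{m\geq 1}q^{m^2/2}\cos m\phi$, which is real and strictly positive, hence never vanishes and has winding number $0$. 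The discrepancy $G_k-\Theta(q^{1/2},\cdot)=-\sum_{m\leq -k-1}q^{m^2/2}u^m$ has modulus $O(|q|^{(k+1)^2/2})$ on $|u|=1$ and is harmless once $k\geq n$. Thus for real $q$ the winding number of $G_k$ is $0$, and for complex $q$ it remains $0$ as long as $G_k$ stays bounded away from $0$ on $|u|=1$, permitting a deformation to the real case without crossing the origin.

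The main obstacle is exactly this uniform non-vanishing near the boundary $|q|=1$, which is also where the constant $\alpha_0=\sqrt{3}/2\pi$ is forced. A crude Rouch\'e estimate comparing $G_k$ to its single term $m=0$ fails here, since $2\sum_{m\geq 1}|q|^{m^2/2}\to\infty$ as $|q|\to 1^-$: many coefficients become comparable in modulus and no monomial dominates. The correct mechanism is that the nearly Gaussian coefficient profile $|q|^{m^2/2}$ confines the image curve $G_k(e^{i\phi})$ to a region avoiding $0$, keeping the winding number at $0$ even when the curve is far from a small perturbation of the constant $1$. Quantifying this requires a sharp lower bound for $|\Theta(q^{1/2},u)|$ on $|u|=1$ as $|q|\to 1^-$, obtained through the modular (theta) transformation — the same source as the thresholds $e^{\pi}$ and $e^{\pi/2}$ appearing elsewhere in the paper. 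The hypothesis $|q|\leq 1-1/(\alpha_0 n)$ with $k\geq n$ is precisely the regime in which this bound balances the tail estimate above: the margin of non-vanishing shrinks like $\exp(-\pi^2/(2(1-|q|)))$ while the tail is $\exp(-(k+1)^2(1-|q|)/2+o(\cdot))$, and the two match at the threshold determined by $\alpha_0$. Feeding winding number $0$ for $G_k$ back through the two reductions gives $N_k=k$ for $k\geq n$ and hence strong separation.
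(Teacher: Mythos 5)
First, a point of order: this paper does not prove Theorem~\ref{tmsepar} at all --- it is quoted from \cite{Ko8} (``The following result can be found in~\cite{Ko8}''), so there is no in-paper proof to compare against and your argument must stand on its own. Your formal reductions are sound: the substitution $u=q^{k+1/2}z$ does give $\theta (q,z)=q^{-k^2/2}\tilde h_k(u)$ with $\tilde h_k(u)=\sum _{l\geq 0}q^{(l-k)^2/2}u^l$; strong separation for $k\geq n$ is equivalent to $N_k=k$ for all $k\geq n-1$ (note $n-1$, not $n$: the first annulus is bounded by $\mathcal{C}_{n-1}$ and $\mathcal{C}_n$, so you also need $N_{n-1}=n-1$); and on $|u|=1$ the discrepancy $G_k-\Theta (q^{1/2},\cdot )$ is indeed $O(|q|^{(k+1)^2/2})$ with a constant close to $1$ throughout the admissible range.

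The genuine gap is the case of non-real $q$, which is the substance of the theorem, since its hypothesis constrains only $|q|$. Everything you offer in support of the key non-vanishing is a real-$q$ fact: the positivity of $1+2\sum _{m\geq 1}q^{m^2/2}\cos m\phi$ and the modular-transformation rate $\exp (-\pi ^2/(2(1-q)))$ both presuppose $q\in (0,1)$, and for complex $q$ the modular transformation does not yield a bound depending only on $|q|$ (the behaviour of $\Theta$ as $|q|\rightarrow 1^-$ depends on $\arg q$; radial limits at different roots of unity scale differently). Your fallback, ``deformation to the real case without crossing the origin,'' is circular: to conclude that the winding number of $G_k$ is constant along the deformation you must already know that $G_k\neq 0$ on $|u|=1$ at every complex $q$ on the path, which is precisely the uniform bound you have not supplied. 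The missing ingredient is the Jacobi triple product $\Theta (q^{1/2},u)=\prod _{j\geq 1}(1-q^j)(1+uq^{j-1/2})(1+u^{-1}q^{j-1/2})$, which repairs the argument at both places where you appeal to realness of $q$: each factor stays in a disk centered at $1$ of radius $|q|^{j-1/2}<1$ as $u$ runs over the unit circle, so the winding number of $\Theta (q^{1/2},\cdot )$ is $0$ for every complex $q$ with $|q|<1$ (no deformation, no positivity needed); and bounding each factor from below in modulus gives $\min _{|u|=1}|\Theta (q^{1/2},u)|\geq \prod _{j\geq 1}(1-|q|^j)(1-|q|^{j-1/2})^2$, a quantity depending only on $|q|$ and of exactly your claimed order $\exp (-\pi ^2/(2(1-|q|)))$ up to subexponential factors (it also justifies the positivity you asserted for real $q$). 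With this lemma inserted, your scheme closes. One further inaccuracy: the two exponentials do not ``match at the threshold determined by $\alpha _0$.'' Balancing $\exp (-(k+1)^2(1-|q|)/2)$ against $\exp (-\pi ^2/(2(1-|q|)))$ requires $(k+1)(1-|q|)>\pi$, i.e. the constant $1/\pi \approx 0.318$, whereas $\alpha _0\approx 0.2757$; since the hypothesis forces $(k+1)(1-|q|)\geq 1/\alpha _0=2\pi /\sqrt{3}>\pi$, it is comfortably sufficient, and your repaired argument would even prove a slightly stronger statement --- but the claim shows the decisive quantitative step was asserted rather than computed.
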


Theorem~\ref{tmsepar} has several important corollaries:
\vspace{1mm}

{\em i)} For each path $\gamma \subset \mathbb{D}_1\setminus \{ 0\}$
in the $q$-space which avoids the spectral numbers of $\theta$,
one can define by continuity the zeros of $\theta$ as functions of $q$ as
$q$ varies along $\gamma$. One
can find $k\in \mathbb{N}$ such that
$\gamma \subset \mathbb{D}_{1-1/(\alpha _0k)}$. For $n\geq k$, the zero $\xi _n$
is an analytic function in $q\in \mathbb{D}_{1-1/(\alpha _0k)}$. Thus the
indices of the zeros $\xi _n$ are meaningful for $n\geq k$ and
$q\in \mathbb{D}_{1-1/(\alpha _0k)}$.
\vspace{1mm}

{\em ii)} Denote by $\Gamma$ the spectrum of $\theta$. If
$\gamma \subset D:=\mathbb{D}_{1-1/(\alpha _0k)}\setminus
\{ \Gamma \cup \{ 0\} \}$
is a loop, then the zeros of $\theta$ lying inside
$\mathcal{C}_k$ might undergo a {\em monodromy} as $q$ varies along $\gamma$,
i.e. a permutation which depends on
the class of homotopy equivalence of $\gamma$ in
$D$. Therefore it might not be possible to correctly
define the indices of these zeros for $q\in D$. 
\vspace{1mm}

{\em iii)} For no 
$q_*\in \mathbb{D}_1\setminus \{ 0\}$ does a zero of $\theta$ go to infinity as
$q\rightarrow q_*$.
That is, zeros are not born and do not disappear at infinity.
\vspace{1mm}

{\em iv)} For $(0,1)\ni q=\tilde{q}_j\in \Gamma$, the function $\theta (q,.)$
has one double zero and infinitely-many simple zeros,
see part (3) of Remarks~\ref{rems1ter} and part~(3) of Remarks~\ref{rems1}. The double zero is a Morse critical
point for $\theta$. Suppose that $\gamma$ is a small loop in
$\mathbb{D}_1\setminus \{ 0\}$
circumventing $\tilde{q}_j$. Then the two zeros $\xi _{2j-1}$ and $\xi _{2j}$ of
$\theta (q,.)$ which coalesce for
$q=\tilde{q}_j$
are exchanged as $q$ varies along $\gamma$. For
$(-1,0)\ni q=\bar{q}_k\in \Gamma$, $k=2s-1$ or $2s$, where $s\geq 1$ is
sufficiently large, the same remark
applies to the zeros $\xi _{4s-2}$ and $\xi _{4s}$ or $\xi _{4s-1}$ and
$\xi _{4s+1}$,
see part (3) of Remarks~\ref{rems1bis}. For the remaining values of $k$, if,
say,  $p$  spectral values $\bar{q}_i$ coincide, then the function
$\theta (\bar{q}_i,.)$ has $p$ double real zeros (its other real zeros are
simple) and the monodromy defined by the 
class of homotopy equivalence of $\gamma$ exchanges 
the zeros in $p$ non-intersecting couples of zeros (which are close to the
double zeros of $\theta (\bar{q}_i,.)$). 
\vspace{1mm}

{\em v)} Theorem~\ref{tmsepar} implies that the monodromy around
$0\in \mathbb{D}_1$ is trivial.

\subsection{Connectedness of $S$}

When $q\in \mathbb{D}_{c_0}\setminus \{ 0\}$, the zeros $\xi _j$
can be considered as analytic functions in $q$. We discuss the
possible monodromies which they can undergo when the parameter $q$ runs
along certain loops in $\mathbb{D}_1\setminus \{ 0\}$. First of all we recall
that for $q\in (0,\tilde{q}_j)$, the zeros
$0>\xi _{2j-1}>\xi _{2j}>\xi _{2j+1}>\cdots$ are simple, real negative and
continuously depending on $q$, see part~(2) of Remarks~\ref{rems1}; for
$q=\tilde{q}_j$, the zeros $\xi _{2j-1}$ and $\xi _{2j}$ coalesce.

Suppose that $a\in (0,c_0)$ and that
$\mathcal{C}^{\sharp}\subset \mathbb{D}_1\setminus \{ 0\}$ is a small
circumference of radius $\varepsilon$
centered at the spectral number $\tilde{q}_j$, see parts (1) and (2) of
Remarks~\ref{rems1}; no spectral number other than $\tilde{q}_j$ belongs to
the circumference $\mathcal{C}^{\sharp}$ or to its interior. Define
$\gamma _j\subset \mathbb{D}_1\setminus \{ 0\}$ as the path
consisting of the segment
$\sigma _+:=[a, \tilde{q}_j-\varepsilon ]\subset \mathbb{R}$,
the circumference $\mathcal{C}^{\sharp}$
(which is run, say, counterclockwise) and the segment
$\sigma _-:=[\tilde{q}_j-\varepsilon , a]$. Hence if one considers the analytic
continuation of the function $\xi _{2j-1}$ (resp. $\xi _{2j}$) along the loop
$\gamma _j$, the result will be the function $\xi _{2j}$ (resp. $\xi _{2j-1}$),
see {\em iv)} in Subsection~\ref{subsecsepar}. We denote this symbolically by
$\gamma _j:\xi _{2j-1}\leftrightarrow \xi _{2j}$. If we need to indicate only the
image of $\xi _{2j-1}$ we might write $\gamma _j:\xi _{2j-1}\mapsto \xi _{2j}$.

\begin{rem}\label{remmodify1}
  {\rm For $j>1$, the two segments $\sigma _{\pm}$
    of the path $\gamma _j$ pass through the
    spectral numbers $\tilde{q}_1$, $\ldots$, $\tilde{q}_{j-1}$.
    If one insists the path $\gamma _j$ to bypass all spectral numbers
    $\tilde{q}_j$, then one should modify $\gamma _j$. Namely, parts of the 
    two segments $\sigma _{\pm}$ which are segments of the form
    $\sigma _s:=[\tilde{q}_s-\varepsilon ',\tilde{q}_s+\varepsilon ']$,
    $0<\varepsilon '\ll \varepsilon$, $1\leq s\leq j-1$,
    should be replaced by small half-circumferences with diameters $\sigma _s$
    which bypass the
    spectral numbers $\tilde{q}_s$ from above or below.}
  \end{rem}

Suppose that $q\in (-1,0)$. We will make use of Remarks~\ref{rems1bis}. 
We construct a path $\delta _j$ consisting of a segment
$\tau _-:=[-a, \bar{q}_j+\varepsilon ]\subset \mathbb{R}$, $-c_0<-a<0$ ($c_0$ is defined at the beginning of Subsection~\ref{subsecsepar}), a 
circumference $\mathcal{C}^{\triangle}\subset \mathbb{D}_1\setminus \{ 0\}$ of
radius $\varepsilon$ centered at
$\bar{q}_j$ (and run, say, counterclockwise) and the segment
$\tau _+:=[\bar{q}_j+\varepsilon ,-a]$. If $\bar{q}_{j_1}\neq \bar{q}_j$,
then the spectral number
$\bar{q}_{j_1}$ does not belong to $\mathcal{C}^{\triangle}$ or to its interior.

Suppose that $j=2s-1$ (resp. $j=2s$). If one considers the analytic
continuation of the functions $\xi _{4s-2}$ and $\xi _{4s}$ (resp. of $\xi _{4s-1}$
and $\xi _{4s+1}$) along the loop
$\delta _j$, the result will be that the functions $\xi _{4s-2}$ and $\xi _{4s}$
(resp. $\xi _{4s-1}$ and $\xi _{4s+1}$) exchange their values,
see {\em iv)} in Subsection~\ref{subsecsepar}. We denote this symbolically by
$\delta _{2s-1}:\xi _{4s-2}\leftrightarrow \xi _{4s}$ or
$\delta _{2s}:\xi _{4s-1}\leftrightarrow \xi _{4s+1}$.

\begin{rem}\label{remmodify2}
  {\rm Similarly to what was done with the path $\gamma _j$, see
    Remark~\ref{remmodify1}, one can modify the path $\delta _j$ so that it
    should pass through no spectral value of $\theta$. We do not claim,
    however, that an equality of the form $\bar{q}_{j_1}=\bar{q}_{j_2}$,
    $j_1\neq j_2$, 
    does not take place (this is not proved in \cite{Ko7}; see part (2) of
    Remarks~\ref{rems1bis}). Nevertheless, even
    if such an equality holds true, then it does not affect our reasoning,
    because when $q$
    runs along $\mathcal{C}^{\triangle}$ close to the coinciding spectral numbers
    $\bar{q}_{j_1}$ and $\bar{q}_{j_2}$, the exchange of zeros $\xi _i$ which
    occurs concerns two couples of zeros with no zero in common.}
\end{rem}

By combining the monodromies defined by the paths $\gamma _j$ and $\delta _j$
one can obtain any monodromy $\xi _k\mapsto \xi _m$. Indeed, denote
by $\eta _+$ a half-circumference centered at $0$, of radius $a$, 
belonging to the upper half-plane (hence the segment $[-a,a]$ is
its diameter) and run counterclockwise, by $\eta _-$ the same
half-circumference run clockwise, by
$\gamma _j\gamma _{\ell}$ the concatenation of the paths $\gamma _j$ and
$\gamma _{\ell}$ (defined for one and the same value of $a$, $\gamma _j$ is
followed by $\gamma _{\ell}$) and similarly for the loops (all with base point
$a$) $\gamma _j\eta _+\delta _s\eta _-$,
$\eta _+\delta _s\eta _-\gamma _j$ etc.
Thus for~$s\geq 1$, one obtains the monodromies 

$$\begin{array}{rclrcl}
  \gamma _{2s-1}&:&\xi _{4s-3}\leftrightarrow \xi _{4s-2}~,&
  \gamma _{2s}&:&\xi _{4s-1}\leftrightarrow \xi _{4s}~,\\ \\
  \delta _{2s-1}&:&\xi _{4s-2}\leftrightarrow \xi _{4s}~,&
  \delta _{2s}&:&\xi _{4s-1}\leftrightarrow \xi _{4s+1}~,\\ \\
  \gamma _{2s-1}\eta _+\delta _{2s-1}\eta _-&:&\xi _{4s-3}\mapsto \xi _{4s}~,&
  \eta _+\delta _{2s-1}\eta _-\gamma _{2s-1}&:&
  \xi _{4s}\mapsto \xi _{4s-3}~,\\ \\ 
  \gamma _{2s-1}\eta _+\delta _{2s-1}\eta _-\gamma _{2s}&:&
\xi _{4s-3}\mapsto \xi _{4s-1}~,&
\gamma _{2s-1}\eta _+\delta _{2s-1}\eta _-\gamma _{2s}\eta _+\delta_{2s}\eta _-&:&
\xi _{4s-3}\mapsto \xi _{4s+1}~{\rm etc.}
\end{array}$$
This means that, for suitably chosen loops, the root $\xi _{4s-3}$
can be mapped  
by the corresponding monodromies into any of the roots $\xi _{4s-2}$,
$\xi _{4s-1}$,
$\xi _{4s}$ or $\xi _{4s+1}$. 
After this one can repeat the reasoning with
$\xi _{4s+1}=\xi _{4(s+1)-3}$ (i.e. one can shift the value of $s$ by $1$)
and so on.

Thus the subset $S^0$ of $S$ on which all zeros of $\theta$ are simple is
connected. The set $S\setminus S^0$ belongs to the topological closure of $S$
(because the zeros of $\theta$ depend continuously on $q$), so $S$
is connected. The theorem is proved.

  \section{Proof of Theorem~\protect\ref{tm01}\protect\label{sectm01}}

  \begin{proof}[Part (1)] In the proof of parts (1) and (2) of the theorem,
    when considering the
    values of $q$ from an interval of the form $(\tilde{q}_k,\tilde{q}_{k+1})$,
    we take into account the first of formulae (\ref{eqasympt1}), so as $q$
    tends to $1^-$ (hence $k$ tends to $\infty$) one has $1-q=O(1/k)$.
    We prove first the following lemma:

  \begin{lm}\label{lmrKr}
    (1) For every $r\in (0,1)$, there exists
    $K_r\in \mathbb{N}$ such that for every $q\in (0,r]$, one has $Z_{[-e^{\pi},0)}(q)\leq K_r$.

        (2) When the zeros $\xi _{2s-1}$ and $\xi _{2s}$ are real
        (see part (2) of Remarks~\ref{rems1}), they belong to the
    interval $(-q^{-2s},-q^{-2s+1})$. 

        (3) For $q\in [\tilde{q}_k, \tilde{q}_{k+1})$, one has $Z_{[-e^{\pi},0)}(q)=o(k)$. 
        \end{lm}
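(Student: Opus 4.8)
The plan is to prove part (2) first and then read off (1) and (3) as consequences. The single tool that drives part (2) is the positivity relation $\theta(q,-q^{-m})\in(0,q^m)$, valid for every $m\in\mathbb{N}^*$ and every $q\in(0,1)$ (part (4) of Remarks~\ref{rems1}): it says that no point of the form $-q^{-m}$ is ever a zero of $\theta(q,\cdot)$, so each such point acts as an impenetrable \emph{barrier} for the real zeros. Since for real $q$ every real zero of $\theta(q,\cdot)$ is one of the $\xi_j$, and these depend continuously (indeed analytically away from the spectrum, by corollary \emph{i)} of Subsection~\ref{subsecsepar}) on $q$, a real zero can never cross a point $-q^{-m}$ as $q$ varies.

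For part (2) I would argue in two steps. \emph{Base case.} Fix $s$ and choose $q_0=q_0(s)>0$ so small that the sign data of part (4) of Remarks~\ref{rems1} hold for the exponents $k=0,1,\dots,2s$, i.e.\ $\mathrm{sgn}\,\theta(q,-q^{-k-1/2})=(-1)^k$ for all $q\in(0,q_0]$. Reading the signs of $\theta(q,\cdot)$ along the points $0,-q^{-1/2},-q^{-1},-q^{-3/2},\dots$ (positive at the integer powers, alternating at the half-integer powers) together with the interlacing rule of part (2) of Remarks~\ref{rems1}, one finds exactly one sign change in each of $(-q^{-2s+1/2},-q^{-2s+1})$ and $(-q^{-2s},-q^{-2s+1/2})$, so that $\xi_{2s-1}\in(-q^{-2s+1/2},-q^{-2s+1})$ and $\xi_{2s}\in(-q^{-2s},-q^{-2s+1/2})$, both strictly inside $(-q^{-2s},-q^{-2s+1})$. \emph{Propagation.} As $q$ runs through $(0,\tilde q_s]$ the functions $\xi_{2s-1}(q),\xi_{2s}(q)$ stay real, since only the lower-index pairs coalesce at the intermediate spectral values $\tilde q_1,\dots,\tilde q_{s-1}$, so $\xi_{2s-1},\xi_{2s}$ remain simple real zeros varying continuously across them. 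Because neither can equal $-q^{-2s+1}$ or $-q^{-2s}$ (the barrier property), the quantities $\xi_{2s-1}(q)+q^{-2s+1}$, $\xi_{2s-1}(q)+q^{-2s}$ and their analogues for $\xi_{2s}$ never vanish and hence keep constant sign on $(0,\tilde q_s]$; evaluating at $q_0$ gives $\xi_{2s-1}(q),\xi_{2s}(q)\in(-q^{-2s},-q^{-2s+1})$ for every $q$ at which the pair is real, which is part (2).

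Parts (1) and (3) then follow. For (1), if $q\in(0,r]$ and a real zero lies in $[-e^{\pi},0)$, it is some $\xi_{2s-1}$ or $\xi_{2s}$ of a real pair, and by (2) its modulus exceeds $q^{-2s+1}$; hence $q^{-2s+1}<e^{\pi}$, i.e.\ $2s-1<\pi/\ln(1/q)\le\pi/\ln(1/r)$. Only finitely many $s$ qualify, giving the uniform bound $Z_{[-e^{\pi},0)}(q)\le K_r:=\lceil 1+\pi/\ln(1/r)\rceil$. For (3), when $q\in[\tilde q_k,\tilde q_{k+1})$ the $k$ coalesced pairs are exactly $\{\xi_{2j-1},\xi_{2j}\}_{j\le k}$ (part (2) of Remarks~\ref{rems1}), so every real zero belongs to a pair with $s\ge k+1$; as in (1) such a zero lies in $[-e^{\pi},0)$ only if $s<\tfrac12+\pi/(2\ln(1/q))$, so the number of contributing pairs is at most $\pi/(2\ln(1/q))-k+O(1)$. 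Since $\ln(1/q)=(1+o(1))(1-q)$ and $1-q\sim\pi/2k$ by the first formula in (\ref{eqasympt1}), one has $\pi/(2\ln(1/q))=k(1+o(1))$, whence the count of pairs is $o(k)$ and $Z_{[-e^{\pi},0)}(q)=o(k)$.

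The crux is part (2). The separation estimate (\ref{eqsepar}) alone is insufficient: it only confines $\xi_{2s-1}$ to an annulus protruding past $-q^{-2s+1}$, and it holds merely for $q\le c_0$, whereas (2) is needed all the way up to $\tilde q_s$. The essential mechanism is therefore the coupling of the sharp base-case localization, which I expect to be the delicate part, with the barrier/continuity propagation; the one point requiring genuine care is verifying that $\xi_{2s-1},\xi_{2s}$ stay real and vary continuously across the lower spectral values $\tilde q_j$, $j<s$, so that the constant-sign argument applies on the whole interval $(0,\tilde q_s]$.
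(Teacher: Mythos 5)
Your proposal is correct and follows essentially the same route as the paper: part (2) comes from the small-$q$ sign data of part (4) of Remarks~\ref{rems1} combined with the barrier property $\theta (q,-q^{-m})>0$ (which prevents real zeros from ever crossing the points $-q^{-m}$ as $q$ varies), and parts (1) and (3) are then deduced by counting which pairs $\{ \xi _{2s-1},\xi _{2s}\}$ can reach $[-e^{\pi},0)$, with formula (\ref{eqasympt1}) giving $s=k+o(k)$ for part (3), exactly as in the paper's proof. The only point to adjust is your claim that for $q\in [\tilde{q}_k,\tilde{q}_{k+1})$ all pairs with $j\leq k$ are complex: at $q=\tilde{q}_k$ itself the pair $\{ \xi _{2k-1},\xi _{2k}\}$ is still real (it is the double zero $\tilde{y}_k$), but this contributes at most $2$ zeros and is absorbed in your $O(1)$ term.
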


  \begin{proof}
    Part (1). It
    follows from part (4) of Remarks~\ref{rems1} that
    for $q>0$ small enough, all zeros $\xi _j$ of $\theta (q,.)$ are real
    and the zeros $\xi _{2s-1}$ and $\xi _{2s}$ belong to the
    interval $(-q^{-2s},-q^{-2s+1})$, so they are smaller than~$-r^{-2s+1}$. And
    in the same way, for
    any $q\in (0,1)$, the zeros $\xi _{2s-1}$ and $\xi _{2s}$, when they are real, belong to the
    interval $(-q^{-2s},-q^{-2s+1})$ (which proves part (2)).

    When $q$ increases and becomes equal to $\tilde{q}_s$, the zeros
    $\xi _{2s-1}$ and $\xi _{2s}$ coalesce. For $q>\tilde{q}_s$,
    they form a complex conjugate pair, see part (2) of Remarks~\ref{rems1}.
    For $q\in (0,r]$ and $2s-1>\pi /\ln (1/r)$, i.e. $q^{-2s+1}\geq r^{-2s+1}>e^{\pi}$
      hence $-q^{-2s+1}<-e^{\pi}$, the zero
    $\xi _j$, $j\geq 2s-1$, is either smaller than $-e^{\pi}$ or
    it has given birth (together with $\xi _{j-1}$ or $\xi _{j+1}$ depending on
    the parity of $j$) to a complex conjugate pair. Therefore $Z_{[-e^{\pi},0)}(q)\leq [\pi /\ln (1/r)]+1$ and one can
    set $K_r:=[\pi /\ln (1/r)]+1$.

    Part (3). Suppose first that $q=\tilde{q}_k$.
    The interval $I:=[-e^{\pi},\tilde{y}_k]$ contains all real zeros
    of $\theta (\tilde{q}_k,.)$ belonging to the interval $J:=[-e^{\pi}, 0)$. The
    rightmost of these zeros which is in $I$ is the double zero $\tilde{y}_k$
    which is the result of the confluence of $\xi _{2k-1}$ and $\xi _{2k}$,
    see parts (2) and (3) of Remarks~\ref{rems1}.
    Denote by $s_0$ the smallest
    of the numbers $s$ for which $-(\tilde{q}_k)^{-2s+1}<-e^{\pi}$. Hence there
    are not more than 
    
    $$t_0:=2(s_0-1)-2(k-1)+1=2(s_0-k)+1$$
    real zeros of
    $\theta (\tilde{q}_k,.)$ in $I$ (counted with multiplicity),
    see the proof of part (1) of the present lemma. 
    One has $\tilde{q}_k=1-\pi /2k+o(1/k)$, see (\ref{eqasympt1}).
    Therefore

    $$-(\tilde{q}_k)^{-2s_0+1}<-e^{\pi}~\Leftrightarrow ~
    (-2s_0+1)\ln (\tilde{q}_k)>\pi ~\Leftrightarrow ~
    2s_0-1>\pi /(\ln (1/\tilde{q}_k))=2k+o(k)~.$$
    On the other hand it follows from the definition of $s_0$ that $2s_0-3\leq \pi /(\ln (1/\tilde{q}_k))=2k+o(k)$. Thus $s_0=k+o(k)$ and $t_0=o(k)$. Suppose now that
    $q\in (\tilde{q}_k, \tilde{q}_{k+1})$.
    Hence when one counts the real zeros of $\theta (q,.)$ in $J$,
      one should take into account that:
      \vspace{1mm}
      
      1) The double root $\tilde{y}_k$ gives birth to a complex conjugate pair
      of zeros, i.e. two real zeros are lost; for
      $q\in (\tilde{q}_k, \tilde{q}_{k+1})$, these are the only real zeros that
      are lost, see part (2) of Remarks~\ref{rems1};
      \vspace{1mm}
      
      2) Denote by $s_*(q)$ the smallest of the numbers $s$ for which one has 
      $-q^{-2s+1}<-e^{\pi}$ (hence $s_*(\tilde{q}_k)=s_0$). For fixed $s$, the
      number $-q^{-2s+1}$ increases
      with $q$, so $s_*(q)$ also increases, i.e. new real zeros might enter
      the interval $J$ from the left.
      \vspace{1mm}

      Thus for $q\in (\tilde{q}_k, \tilde{q}_{k+1})$, one has $Z_J(q)\leq t_1+2$, where $t_1$ is the quantity $t_0$
      defined for $k+1$
      instead of $k$, hence $Z_J(q)=o(k)$. Indeed, the numbers $-q^{-2s+1}$
      increase with $q$. We cannot claim that if for $s=s_*(q)-1$, one has $-q^{-2s+1}\geq -e^{\pi}$, then
      the zeros $\xi _{2s-1}$ and
      $\xi _{2s}$ are larger or smaller than $-e^{\pi}$; this is why $2$ is
      added to~$t_1$.

  \end{proof}

  The proof of part (1) of Theorem~\ref{tm01} results from part~(3) of Lemma~\ref{lmrKr}.
  Indeed, one has $k=O(1/(1-\tilde{q}_k))$, see (\ref{eqasympt1}). 

  \end{proof}

  \begin{proof}[Part (2)]The function $\theta$ satisfies the following
    functional equation:

    \begin{equation}\label{eqfunct}
      \theta (q,x)=1+qx\theta (q,qx)~.
      \end{equation}
For $q=\tilde{q}_k\in \Gamma$, we denote by $\cdots <x_2<x_1<x_0<0$ the numbers
$x_0=\tilde{y}_k$, $x_s=x_{s-1}/\tilde{q}_k$, $s\in \mathbb{N}$ (i.e.
$x_s=\tilde{y}_k/(\tilde{q}_k)^s$). Hence 
$\theta (\tilde{q}_k,x_0)=0$,
$\theta (\tilde{q}_k,x_1)=1+x_0\theta (\tilde{q}_k,x_0)=1$
(see (\ref{eqfunct})), and for $s>1$,
\vspace{1mm}

(i) if $\theta (\tilde{q}_k,x_s)<0$, then
$\theta (\tilde{q}_k,x_{s+1})=1+x_s\theta (\tilde{q}_k,x_s)>1$;
\vspace{1mm}

(ii) if $\theta (\tilde{q}_k,x_s)\geq 1$ (this is the case for $s=1$), 
then for $k$
sufficiently large, one has $x_s<-e^{\pi}/2$ (see (\ref{eqasympt1})), 
$\tilde{q}_k\in (0.3,1)$ (see parts (1) and (2) of Remarks~\ref{rems1}) hence 
$\tilde{q}_kx_s<-0.3\times e^{\pi}/2<-3$ and

$$\theta (\tilde{q}_k,x_{s+1})=1+\tilde{q}_kx_s\theta (\tilde{q}_k,x_s)<
1-3=-2<0~.$$

Thus for $k$ sufficiently large, we have $\theta (\tilde{q}_k,x_s)<0$ for
$s\geq 2$ even and $\theta (\tilde{q}_k,x_s)>0$
for $s\geq 3$ odd. Hence each interval $(x_{s+1},x_s)$ contains a zero of 
$\theta$. For a fixed interval $[-a,-e^{\pi}]$, consider the intervals $(x_{s+1},x_s)$ which are its subintervals. 
As $k\rightarrow \infty$ (hence $\tilde{q}_k\rightarrow 1^-$) the
lengths of these intervals
tend uniformly to $0$. Indeed, 
the largest of them is the last one and its length is 
$\leq (a-aq)=(1-q)a$.
Therefore for any $a>e^{\pi}$, the set of zeros of 
$\theta (\tilde{q}_k,.)$
(over all $k$ sufficiently large) is everywhere dense in the interval 
$[-a,-e^{\pi}]$.
This proves the first claim of part (2) of the theorem.
To prove the second one we first consider the case $q=\tilde{q}_k\in \Gamma$.
We define the quantities $u_0$, $u_1\in \mathbb{N}$ by the conditions 

$$\begin{array}{cccccccccll}
  |\tilde{y}_k|/q^{u_0}&=&|x_{u_0}|&\leq &e^{\pi}&<&|x_{u_0+1}|&=&
  |\tilde{y}_k|/q^{u_0+1}&&
{\rm and}\\ \\ 
|\tilde{y}_k|/q^{u_1}&=&|x_{u_1}|&\leq &a&<&|x_{u_1+1}|&=&
|\tilde{y}_k|/q^{u_1+1}&&.\end{array}$$ 
Hence (remember that $\ln q<0$) 

$$(u_0+1)\ln q<\ln (|\tilde{y}_k|/e^{\pi})\leq u_0\ln q~~~\, \, {\rm and}~~~\, \, 
(u_1+1)\ln q<\ln (|\tilde{y}_k|/a)\leq u_1\ln q$$
which, taking into account that as $q\rightarrow 1^-$, one has 
$\ln q=\ln (1+(q-1))=(q-1)+o(q-1)$, implies 

$$u_0(q-1)=\ln (|\tilde{y}_k|/e^{\pi})+o(q-1)~~~\, \, {\rm and}~~~\, \, 
u_1(q-1)=\ln (|\tilde{y}_k|/a)+o(q-1)~.$$
It is clear that 
$\ell _a(q)=u_1-u_0+O(1)$. Thus 

$$\ell _a(q)(1-q)=(u_1-u_0)(1-q)+O(1)(1-q)=\ln (a/e^{\pi})+O(1-q)~.$$
Now suppose that $q\in (\tilde{q}_k,\tilde{q}_{k+1})$. Our reasoning is
similar to the one in the proof of Lemma~\ref{lmrKr}. The double zero
$\tilde{y}_k$ gives birth to a complex conjugate pair, so two real zeros
are lost. If for $q=q_*\in (\tilde{q}_k,\tilde{q}_{k+1})$, the interval
$(-q_*^{-2s},-q_*^{-2s+1})$ is a subset of the interval $[-a,0)$,
  then the same is true
  for $q=\tilde{q}_{k+1}$. Thus
  \begin{equation}\label{eqstar}\ell _a(q_*)\leq Z_{[-a,0)}(\tilde{q}_{k+1})+2~.\end{equation} 
  One adds $2$ in order to take into account the
      two zeros of $\theta (\tilde{q}_{k+1},.)$ of the not more than one interval
      $(-q_*^{-2s},-q_*^{-2s+1})$ which belongs partially,
      but not completely, to $[-a,0)$. The number $2$ of lost zeros and the
        number $2$ in (\ref{eqstar}) are $o(1/(1-q_*))$.
        According to part (1) of the theorem
        $$Z_{[-a,0)}(\tilde{q}_{k+1})=\ell _a(\tilde{q}_{k+1})+o(1/(1-q_*))~,$$
          and for $q=\tilde{q}_{k+1}$, it was shown that
          $(1-\tilde{q}_{k+1})\ell _a(\tilde{q}_{k+1})=\ln (a/e^{\pi})+o(1)$, so
          $\ell _a(q_*)=\ell _a(\tilde{q}_{k+1})+o(1/(1-q_*))$ and
          $(1-q_*)\ell _a(q_*)=\ln (a/e^{\pi})+o(1)$ which proves part (2) of the
          theorem.

    \end{proof}

  \begin{proof}[Part (3)] We need the following lemma:
    \begin{lm}\label{lmrho} Suppose that $q\in (-1,0)$ and set $\rho :=|q|$. Then:
      
      (1) For $\rho >0$ small enough, one has
      
      \begin{equation}\label{eq4eq}
       \begin{array}{lcl}
\xi _{4s}\in (-\rho ^{-4s-1}, -\rho ^{-4s+1})~,&&
      \xi _{4s+2}\in (-\rho ^{-4s-3}, -\rho ^{-4s-1})~,\\ \\ 
      \xi _{4s-1}\in (\rho ^{-4s+2}, \rho ^{-4s})&~~~\, {\rm and}~~~\, &
      \xi _{4s+1}\in (\rho ^{-4s}, \rho ^{-4s-2})~.\end{array}\end{equation} 
      Moreover, the mentioned zeros $\xi _j$ 
      are the only zeros of $\theta (q,.)$ in the indicated intervals.

      (2) For $\rho \in (0,1)$, one has $\theta (q, -q^{-2k})=
      \theta (q, -\rho ^{-2k})\in (0, \rho ^{2k}+\rho ^{4k+1})$.

      (3) For $q\in [\bar{q}_{2s-1},0)$, the zeros $\xi _{4s-2}$ and
  $\xi _{4s}$ belong to
  the interval $I^{\bullet}:=(-\rho ^{-4s},-\rho ^{-4s+2})$.
    \end{lm}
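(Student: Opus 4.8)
\noindent\textit{Sketch of a proof.}

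For part (1) my plan is to read off the locations directly from two facts already available. The separation in modulus — inequality~(\ref{eqsepar}) together with Theorem~\ref{tmsepar} — puts $|\xi _k|$ in the annulus $\rho^{-k+1/2}<|z|<\rho^{-k-1/2}$, and for $\rho=|q|$ small the reality and sign pattern of Remarks~\ref{rems1bis}(1) give $\cdots <\xi _4<\xi _2<0<\xi _1<\xi _3<\cdots$, so that even-indexed zeros are negative and odd-indexed ones positive. Each such annulus lies strictly inside the corresponding real interval of~(\ref{eq4eq}) (those intervals span width $2$ in the exponent, the annuli only width~$1$), so attaching the sign lands each $\xi _k$ in the asserted interval. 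For the ``only zero'' clause I note that a given interval of~(\ref{eq4eq}) meets in modulus only the three neighbouring annuli of indices $k-1,k,k+1$; the two neighbours carry the opposite sign, hence the unique zero that is simultaneously real and of the correct sign there is the claimed one (for small $\rho$ there are no complex zeros to interfere).

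The heart of part (2) is an exact cancellation. Substituting $z=-q^{-2k}=-\rho^{-2k}$ into~(\ref{eqdefi}) gives $\theta (q,-\rho^{-2k})=\sum _{j\geq 0}(-1)^jq^{e(j)}$ with $e(j)=j(j+1-4k)/2$. This exponent is symmetric about $j=2k-1/2$, i.e. $e(j)=e(4k-1-j)$, while the signs are opposite, $(-1)^{4k-1-j}=-(-1)^j$. Hence the $4k$ terms with $0\leq j\leq 4k-1$ cancel in the $2k$ pairs $(j,4k-1-j)$, leaving $\theta (q,-\rho^{-2k})=\sum _{j\geq 4k}(-1)^jq^{e(j)}$.

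To bound this tail I would carry out a short parity count: writing $q=-\rho$, the term of index $j$ has sign $(-1)^{j+e(j)}$, which is $+$ for $j$ congruent to $0$ or $1$ and $-$ for $j$ congruent to $2$ or $3$ modulo~$4$; since the tail starts at $j=4k$ the signs run $+\,+\,-\,-\,+\,+\,-\,-\cdots$. As $e(j)$ increases for $j\geq 4k$, the moduli $\rho^{e(j)}$ strictly decrease, so grouping consecutive terms in pairs turns the tail into a strictly decreasing alternating series $G_0-G_1+G_2-\cdots$ with $G_0=\rho^{e(4k)}+\rho^{e(4k+1)}=\rho^{2k}+\rho^{4k+1}$. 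The usual alternating-series bracketing then yields $0<G_0-G_1<\theta (q,-\rho^{-2k})<G_0=\rho^{2k}+\rho^{4k+1}$; the monotonicity of the groups is uniform in $\rho$, so this holds for all $\rho\in(0,1)$, which is part~(2).

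Part (3) I would obtain from part (2) by a barrier argument, parallel to the positive-$q$ statement of Lemma~\ref{lmrKr}(2). Both endpoints of $I^{\bullet}=(-\rho^{-4s},-\rho^{-4s+2})$ have the even-power form $-\rho^{-2(2s)}$, $-\rho^{-2(2s-1)}$ covered by part~(2), so $\theta (q,\cdot )>0$ there for every $q\in(-1,0)$. For small $\rho$ the interior point $-\rho^{-4s+1}$ gives $\theta <0$ (there its two dominant terms add instead of cancelling), so the sign pattern $+,-,+$ across $-\rho^{-4s},-\rho^{-4s+1},-\rho^{-4s+2}$ forces one zero into each half; by part~(1) these are $\xi _{4s}$ and $\xi _{4s-2}$, both strictly inside $I^{\bullet}$. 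As $q$ decreases from $0^-$ towards $\bar q_{2s-1}$ these two zeros stay real (Remarks~\ref{rems1bis}(3)) and move continuously, while part~(2) keeps $\theta $ strictly positive at the (moving) endpoints; thus $q\mapsto \xi _{4s}(q)+\rho^{-4s}$ and $q\mapsto -\rho^{-4s+2}-\xi _{4s-2}(q)$ are continuous, positive for small $\rho$, and never vanish, so they remain positive and both zeros stay in $I^{\bullet}$. I expect this propagation to be the main obstacle: near $q=\bar q_{2s-1}$ the modulus estimates of part~(1) and the strong separation of Theorem~\ref{tmsepar} both fail (the two zeros are coalescing), so the containment cannot be re-derived locally and must be carried along by continuity, with the positivity furnished by part~(2) as the only available barrier — keeping in mind that complex zeros never lie on the real interval and that the two tracked zeros can collide only at $\bar q_{2s-1}$ itself.
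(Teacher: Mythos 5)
Your proposal is correct, and on parts (2) and (3) it follows essentially the paper's own route: the same pairwise cancellation of the first $4k$ terms (pairing $j$ with $4k-1-j$), the same treatment of the tail as positive, strictly decreasing groups with alternating signs (the paper phrases it as the sum of two Leibniz series, you as one alternating series of paired terms --- an immaterial difference), and for part (3) the same barrier-plus-continuity argument, with the endpoints of $I^{\bullet}$ kept positive for all $q\in (-1,0)$ by part (2) and the two zeros confined inside until they coalesce at $\bar{q}_{2s-1}$. Where you genuinely diverge is part (1): the paper proves the four inclusions by a dominant-term sign analysis of the series $\theta (-\rho ,-\rho ^{-4s+1})$, $\theta (-\rho ,-\rho ^{-4s-1})$, $\theta (-\rho ,\rho ^{-4s})$, $\theta (-\rho ,\rho ^{-4s+2})$ (the two equal terms of largest modulus dictate the sign for $\rho$ small) and then invokes $\xi _m\sim -q^{-m}$ for the uniqueness clause, whereas you read the inclusions off the separation-in-modulus inequality (\ref{eqsepar}) (valid for $|q|\leq c_0$) combined with the sign pattern of Remarks~\ref{rems1bis}: each annulus, of exponent width $1$, sits strictly inside the corresponding interval of (\ref{eq4eq}), of exponent width $2$, and uniqueness comes free from strong separation, since only the annuli of indices $k-1,k,k+1$ meet a given interval and the two neighbouring zeros have the wrong sign. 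Your version is shorter and handles uniqueness more cleanly, at the price of leaning on Lemma~1 of \cite{Ko8}; it also yields, as a by-product, that all zeros are real for real $q$ with $|q|\leq c_0$ (a complex zero and its conjugate would share an annulus). Two points should be made explicit to close the argument: first, you cannot dispense with the paper's dominant-term computation entirely --- your part (3) still needs $\theta (q,-\rho ^{-4s+1})<0$ to get the sign pattern $+,-,+$ across $I^{\bullet}$, which is exactly the paper's computation of $\theta ^{\diamond}$, so that one series analysis must be carried out; second, you should note why the enumeration of zeros by annuli in (\ref{eqsepar}) agrees with the enumeration by interlacing in Remarks~\ref{rems1bis} (both arise by continuation from $\xi _k\sim -q^{-k}$ for small $|q|$), since your argument uses the two interchangeably.
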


    \begin{proof}[Proof of Lemma~\ref{lmrho}] Part (1). 
      We consider the following four series:

      $$\begin{array}{lcll}
        \theta ^{\diamond}:=\theta (-\rho ,-\rho ^{-4s+1})=\sum _{j=0}^{\infty}d_j~,&
        ~~~\, &d_j:=(-1)^{j(j+3)/2}\rho ^{-(4s-1)j+j(j+1)/2}~,&\\ \\
        \theta ^{\nabla}:=\theta (-\rho ,-\rho ^{-4s-1})=\sum _{j=0}^{\infty}h_j~,&&
  h_j:=(-1)^{j(j+3)/2}\rho ^{-(4s+1)j+j(j+1)/2}~,&\\ \\ 
        \theta ^{\heartsuit}:=\theta (-\rho ,\rho ^{-4s})=\sum _{j=0}^{\infty}r_j~,&&
        r_j:=(-1)^{j(j+1)/2}\rho ^{-4sj+j(j+1)/2}&{\rm and}\\ \\
        \theta ^{\star}:=\theta (-\rho ,\rho ^{-4s+2})=
  \sum _{j=0}^{\infty}\lambda _j~,&& 
  \lambda _j:=(-1)^{j(j+1)/2}\rho ^{-(4s-2)j+j(j+1)/2}~.\end{array}$$
      For the first series, its terms of largest modulus are $d_{4s-1}$ and
      $d_{4s-2}$; one has
  $d_{4s-1}=d_{4s-2}=-\rho ^{-8s^2+6s-1}$. The moduli of the terms decrease rapidly
      as $j>4s-1$ increases or as $j<4s-2$ decreases. In this series the sign
      $(-1)^{j(j+3)/2}$
  is positive for $j=4\nu$ and $j=4\nu +1$ and negative for $j=4\nu +2$
  and $j=4\nu +3$. Hence for $\rho$ small enough, the sign of
  $\theta ^{\diamond}$ is the same as the one of $d_{4s-1}+d_{4s-2}$, i.e. one has
  $\theta ^{\diamond}<0$.

  For the other three series the largest modulus terms are respectively
  $h_{4s}=h_{4s+1}=\rho ^{-8s^2-2s}>0$, $r_{4s-1}=r_{4s}=\rho ^{-8s^2+2s}>0$ and
  $\lambda _{4s-3}=\lambda _{4s-2}=-\rho ^{-8s^2+10s-3}<0$, so in the same way 
  $\theta ^{\nabla}>0$, $\theta ^{\heartsuit}>0$ and $\theta ^{\star}<0$. Hence
  there is at least one zero of $\theta$ in the interval
  $(-\rho ^{-4s-1}, -\rho ^{-4s+1})$. In fact, there is exactly one zero,
  and this is $\xi _{4s}$. Indeed, for $\rho$ small enough this is true, because one has
  $\xi _m\sim -q^{-m}$, see~\cite{Ko14} (the zeros $\xi _{4s-1}$ and $\xi _{4s+1}$ are positive, so only $\xi _{4s}$ belongs to $(-\rho ^{-4s-1}, -\rho ^{-4s+1})$). For any $\rho \in (0,1)$, this follows from the fact that as $\rho$ increases, new complex conjugate pairs are born, but the inverse does not take place, see part~(2) of Remarks~\ref{rems1}. In the same way one proves the rest of
  part (1) of the lemma.

  Part (2). One checks directly that

  $$\begin{array}{rcl}
    \theta (q,-\rho ^{-2k})&=&\sum _{j=0}^{\infty}q^{j(j+1)/2}(-\rho ^{-2k})^j=
  \sum _{j=0}^{\infty}(-1)^{j(j+3)/2}\rho ^{-2kj+j(j+1)/2}\\ \\ &=&
  \sum _{j=4k}^{\infty}(-1)^{j(j+3)/2}\rho ^{-2kj+j(j+1)/2}~.\end{array}$$
  The last of these equalities follows from the fact that the first $4k$ terms
  of the series cancel (the first with the $(4k)$th, the second with the
  $(4k-1)$st etc.). The signs of the terms of the last of these series are
  $+,+,-,-,+,+,-,-,\cdots$ and the exponents $-2kj+j(j+1)/2$ are increasing
  for $j\geq 4k$. Hence the series is the sum of two Leibniz series
  with positive first terms, so its sum is positive and not larger than the sum
  of the first terms of these two series. The latter sum is
  $\rho ^{2k}+\rho ^{4k+1}$ which proves part~(2).

  Part (3). For $\rho$ sufficiently small, the zeros $\xi _{4s-2}$ and
  $\xi _{4s}$ belong to $I^{\bullet}$. Indeed, by part (2) of the
  present lemma, at the 
  endpoints of $I^{\bullet}$ the function $\theta (q,.)$
  is positive while it is negative at $-\rho ^{-4s+1}$ (we showed already that
  $\theta ^{\diamond}<0$). As $\theta (q,.)$ is positive
  at the endpoints for any $q\in (-1,0)$, the zeros $\xi _{4s-2}$ and
  $\xi _{4s}$ belong to $I^{\bullet}$ exactly for $q\in [\bar{q}_{2s-1},0)$,
    see part (3) of Remarks~\ref{rems1bis}. This proves part~(3) of
    Lemma~\ref{lmrho}. 
      \end{proof}

    Suppose first that $q=\bar{q}_{2\nu -1}$, $\nu \in \mathbb{N}$.
    The rightmost of
    the negative zeros of $\theta (\bar{q}_{2\nu -1},.)$ is the double zero
    $\bar{y}_{2\nu -1}=\xi _{4\nu -2}=\xi _{4\nu}$, see part (3) of
    Remarks~\ref{rems1bis}. Denote
    by $s^{\dagger}=s^{\dagger}(\bar{q}_{2s-1})$ the largest of the
    numbers $s\in \mathbb{N}$ for which one has
    $-(\bar{q}_{2\nu -1})^{-4s}\geq -e^{\pi /2}$. Hence the zero $\xi _{4s^{\dagger}}$ is
    in the interval $[-e^{\pi /2},0)$ and the zero $\xi _{4(s^{\dagger}+1)}$
      is to its left, i. e. outside it. Thus the number
      $\tilde{N}(\bar{q}_{2\nu -1}):=Z_{[-e^{\pi /2},0)}(\bar{q}_{2\nu -1})$ (the zeros in $[-e^{\pi /2},0)$ have only even indices $i$, see
      Remarks~\ref{rems1bis}) is

      $$\tilde{N}(\bar{q}_{2\nu -1})=(4s^{\dagger}-4\nu +2)/2+u=
      2(s^{\dagger}-\nu )+1+u~,$$
      where $u\leq 1$ (the presence of the number $u$ reflects the fact that we do not say whether the zero $\xi _{4s^{\dagger}+2}$ belongs or not to the interval $[-e^{\pi /2},0)$). The conditions

      $$-(\bar{q}_{2\nu -1})^{-4(s^{\dagger}+1)}<-e^{\pi /2}\leq
      -(\bar{q}_{2\nu -1})^{-4(s^{\dagger})}$$
      are equivalent to
      $-4(s^{\dagger}+1)\ln |\bar{q}_{2\nu -1}|>\pi /2
      \geq -4s^{\dagger}\ln |\bar{q}_{2\nu -1}|$ or to

      $$\left\{ \begin{array}
        {ccl}4(s^{\dagger}+1)&>&(\pi /2)/(\ln (1/|\bar{q}_{2\nu -1}|))=
        (\pi /2)/(\ln (1+\pi /(8(2\nu -1))+o(1/\nu )))=4\nu +O(1)~,\\ \\
        4s^{\dagger}&\leq&(\pi /2)/(\ln (1/|\bar{q}_{2\nu -1}|))~,
        \end{array}\right.$$
      see the first of formulae (\ref{eqasympt2}). Thus

      \begin{equation}\label{eqsN}
        s^{\dagger}=\nu +O(1)~~~\, 
        {\rm and}~~~\, \tilde{N}(\bar{q}_{2\nu -1})=O(1)~.
      \end{equation}
      One can also write
      $\tilde{N}(\bar{q}_{2\nu -1})=o(\nu )=o(1/(1+\bar{q}_{2\nu -1}))$.
      Hence $\tilde{N}(\bar{q}_{2\nu +1})=o(\nu )$.

      Now suppose that
      $q\in (\bar{q}_{2\nu +1},\bar{q}_{2\nu -1})$. When counting the zeros
      $\xi _i$ in the interval $[-e^{\pi /2},0)$ one takes into account that the
        double zero $\xi _{4\nu -2}=\xi _{4\nu}$ is lost
        (it gives birth to a complex conjugate pair). The numbers $-\rho ^{-4s}$
        (which are left endpoints of intervals $I^{\bullet}$) increase, so new
        zeros $\xi _i$ might enter the interval $[-e^{\pi /2},0)$ from the left.
          The number of such intervals $I^{\bullet}$ which belong entirely to
          $[-e^{\pi /2},0)$ is not greater than their number for
            $q=\bar{q}_{2\nu +1}$. There is at most one interval $I^{\bullet}$
            which belongs only partially to $[-e^{\pi /2},0)$, so ignoring it
              means not counting at most $2$ zeros $\xi _i\in [-e^{\pi /2},0)$.
                Therefore
                $\tilde{N}(q)=\tilde{N}(\bar{q}_{2\nu +1})+O(1)=
                o(\nu )=o(1/(1+q))$. Part~(3) of Theorem~\ref{tm01} is proved.
                
  \end{proof}

  \begin{proof}[Part (4)]
    Consider an interval of the form $[-a,-e^{\pi /2}]$ and its subinterval
    $(-a^*,-a^{\triangle})$, $e^{\pi /2}<a^{\triangle}<a^*<a$. For $\nu \in \mathbb{N}$
    sufficiently large, the double zero
    $\bar{y}_{2\nu -1}=\xi _{4\nu -2}=\xi _{4\nu}$ of 
    $\theta (\bar{q}_{2\nu -1},.)$ is to the right of $-a^{\triangle}$ (see the second of formulae~(\ref{eqasympt2})) and there
    exists an interval of the form $I^{\bullet}$ (see Lemma~\ref{lmrho})
    such that 
    $I^{\bullet}\subset
    (-a^*,-a^{\triangle})$. Indeed,
    the length of $I^{\bullet}$
    equals $\rho ^{-4s}(1-\rho ^2)$. For each $s$ sufficiently large, one can
    choose $\rho \in (0,1)$ such that
    \begin{equation}\label{eqstarstar}
-\rho ^{-4s}\in (-a^*,(-a^*-a^{\triangle})/2)~.\end{equation}
If one chooses a larger $s$, then 
    one can achieve condition (\ref{eqstarstar}) by choosing $\rho$ closer to $1$. This
    means that, as $\rho ^{-4s}$ remains bounded, the length of $I^{\bullet}$
    tends to $0$ and one can attain
    both conditions (\ref{eqstarstar})  and $-\rho ^{-4s+2}\in (-a^*,-a^{\triangle})$. Thus
    $\xi _{4s-2}$, $\xi _{4s}\in (-a^*,-a^{\triangle})$, see part~(3) of
    Lemma~\ref{lmrho}. This proves the first claim
    of part~(4) of Theorem~\ref{tm01}.

    To prove the second claim, for $q^*\in (-1,0)$,
    we denote by $s^{\sharp}(q^*)$
    the value of $s\in \mathbb{N}$ corresponding to the leftmost of the
    numbers $-(q^*)^{-4s}$ belonging to the interval $[-a,0)$.
      In the proof of part~(4) of Theorem~\ref{tm01} we set $\rho :=|q^*|$,
      so $-(q^*)^{-4s}=-\rho ^{-4s}$.
      Hence

      $$\lim _{\rho \rightarrow 1^-}(-\rho ^{-4s^{\sharp}(q^*)})=-a~~~,~~~\, \,
      -\rho ^{-4s^{\sharp}(q^*)}>-a~~~\, \, {\rm and}~~~\, \,
      -\rho ^{-4(s^{\sharp}(q^*)+1)}<-a~.$$
      From the latter two inequalities, 
      having in mind that $\ln (1/\rho )=(1-\rho )+o(1-\rho )$, one gets

      \begin{equation}\label{eqsim}
        s^{\sharp}(q^*)\sim (\ln a)/(4(1-\rho ))~.
        \end{equation}

      Now we partition the zeros of $\theta (q^*,.)$ with negative real parts
      in several sets (we remind that there are no zeros of
      $\theta (q^*,.)$ on the imaginary axis for any $q^*\in (-1,0)$,
      see~\cite{Ko15}):
      \vspace{1mm}

      1) The set $S_{\infty}$ of zeros $\xi _j$ belonging to the intervals
      $I^{\bullet}$ with $s\geq s^{\sharp}(q^*)+2$. These zeros (when considered
      as depending continuously on $q\in [q^*,0)$) are real and do not
        belong to the interval $[-a,0)$ for any $q\in [q^*,0)$.
            \vspace{1mm}

          2) The set $S_0$ of the two zeros of the interval $I^{\bullet}$ with
          $s=s^{\sharp}(q^*)+1$.
          \vspace{1mm}

          3) The set $S_R$ of the other real negative zeros of $\theta (q^*,.)$. We
          subdivide this set into $S_R([-a,-e^{\pi /2}])$ and
          $S_R((-e^{\pi /2},0))$ of zeros belonging to the respective intervals.
          \vspace{1mm}

          4) The set $S_I$ of the complex conjugate pairs of zeros of
          $\theta (q^*,.)$ which have negative real parts. For $q^*\in (\bar{q}_{2\nu +1},\bar{q}_{2\nu -1})$,
          their number is~$\nu$. For $q^*<0$ close to zero, the zeros of the set
          $S_I$ are real and belong to intervals $I^{\bullet}$, and as $q^*$
          decreases, they form complex conjugate pairs, see
          Remarks~\ref{rems1bis}.  
          \vspace{1mm}

          By abuse of notation we denote by the same symbols sets
          (e.g. $S_I$, $S_R$ etc.) and the number of zeros of $\theta$ which
          they contain. We remind that the numbers
          $n_a(q^*)$ and $s^{\dagger}(q^*)$
          are defined in Notation~\ref{notalnp} and in the proof of part~(3)
          of the present theorem respectively; the number $s^{\dagger}(q^*)$
          satisfies the first of conditions (\ref{eqsN}).  Hence for
          $q^*\in (\bar{q}_{2\nu +1},\bar{q}_{2\nu -1})$, one has

          \begin{equation}\label{eqend1}
            n_a(q^*)=S_R([-a,-e^{\pi /2}])+A~,
            \end{equation}
          where $A=0$, $1$ or $2$ is the number of zeros of the set $S_0$ which
          belong to the interval $[-a,-e^{\pi /2}]$. On the other hand,

          \begin{equation}\label{eqend2}
            S_R([-a,-e^{\pi /2}])=2s^{\sharp}(q^*)-S_R((-e^{\pi /2},0))-S_I~.
            \end{equation}
          Recall that $S_I=2\nu$. By the first of equations (\ref{eqsN}) one has 
          $\nu =s^{\dagger}(q^*)+O(1)$, and by part~(3) of the present theorem one has 
          $S_R((-e^{\pi /2},0))=o(\nu )$. That's why equations (\ref{eqend1})
          and (\ref{eqend2}) imply

          \begin{equation}\label{eqend3}
            n_a(q^*)=2s^{\sharp}(q^*)-2s^{\dagger}(q^*)+o(\nu )~.
          \end{equation}
          The factor $2$ corresponds to the fact that there are two zeros
          $\xi _i$ in the interval $I^{\bullet}$. One can apply formula
          (\ref{eqsim}) with $a=e^{\pi /2}$ to obtain
          $s^{\dagger}(q^*)\sim \ln (e^{\pi /2})/(4(1-\rho ))$ and from
          (\ref{eqend3}) one concludes that
          $n_a(q^*)=(\ln (a/e^{\pi /2}))/(2(1-\rho ))+o(1/(1-\rho ))$
          from which part~(4) of the theorem follows.

  \end{proof}

  \begin{proof}[Parts (5) and (6)]
    We begin by proving the first claim of part~(6); in this part of the proof
    we write $q$ instead of
    $\bar{q}_{2s}$. For any $\varepsilon >0$, there exists
    $s^{\nabla}\in \mathbb{N}$ such that
    for $s\geq s^{\nabla}$, one has $\bar{y}_{2s}\in (e^{\pi /2}-\varepsilon$,
    $e^{\pi /2}+\varepsilon )$, see formulae~(\ref{eqasympt2}). We assume that $\varepsilon <1/2$, so
    $\bar{y}_{2s}>3$. For $s\geq s^{\nabla}$, we set
    $x_j:=\bar{y}_{2s}/q^j$, $j\in \mathbb{N}$. One has $\theta (q,x_0)=0$, $x_{2m}>0$, 
    $x_{2m+1}<0$ and $|x_j|>3$. Therefore

    $$\begin{array}{lcl}
      \theta (q,x_1)&=&1+x_0\theta (q,x_0)=1>0~,\\ \\
      \theta (q,x_2)&=&1+x_1\theta (q,x_1)=
      1+x_1(1+x_0\theta (q,x_0))<1-3=-2<0~,\\ \\
      \theta (q,x_3)&=&1+x_2\theta (q,x_2)<-2<0~~~\, {\rm and}\\ \\
      \theta (q,x_4)&=&1+x_3\theta (q,x_3)>2>0~.
    \end{array}$$
    In the same way one shows that $\theta (q,x_{4m+2})<-2<0$ and
    $\theta (q,x_{4m+4})>2>0$. Hence at least one zero of $\theta (q,.)$
    belongs to the interval $(x_{4m+2},x_{4m+4})$. The longest of these intervals
    for which $x_{4m+2}\in [e^{\pi /2}+\varepsilon ,a]$ is the last one,
    i.e. the one with
    largest value of $m$. Its length is $\leq a((1/q^2)-1)$ which quantity
    tends to $0$ as $q\rightarrow -1^+$ (i.e. as $s\rightarrow \infty$).
    Hence the zeros of $\theta (q,.)$ are everywhere dense in the interval
    $[e^{\pi /2}+\varepsilon ,a]$, and as $\varepsilon >0$ is arbitrary, they are
    everywhere dense in $[e^{\pi /2},a]$. This proves the first claim of part~(6).

    To prove part (5) we observe that for $x\in (\xi _{4s+4},\xi _{4s+2})$, one has
    $\theta (q,x)\leq 0$ and according to (\ref{eqfunct}),
    $\theta (q,qx)=\theta (q,x)/(qx)-1/(qx)<0$ (because $qx>0$). Hence 
    $(q\xi _{4s+2},q\xi _{4s+4})\subset (\xi _{4s+1},\xi _{4s+3})$,
    see Fig.~3 in~\cite{Ko7} (in~\cite{Ko7} the latter inclusion is proved
    only for $q\in [-0.108, 0)$; for any $q\in (-1,0)$, provided that the
      zeros $\xi _{4s+1}$, $\xi _{4s+2}$, $\xi _{4s+3}$ and $\xi _{4s+4}$ are real,
      it follows by continuity). Thus

    $$Z_{(0,e^{\pi /2}]}(q)=Z_{[-e^{\pi /2}/|q|,0)}(q)+B=Z_{[-e^{\pi /2},0)}(q)+
        Z_{[-e^{\pi /2}/|q|,-e^{\pi /2})}(q)+B~,$$
          where $B=-1$, $0$ or $1$ indicates that the count might not concern
          the leftmost zero in $[-e^{\pi /2}/|q|,0)$ and/or the rightmost zero in
            $(0,e^{\pi /2}]$. By parts (3) and (4) of the present theorem
          each of the summands $Z_{[-e^{\pi /2},0)}(q)$ and $Z_{[-e^{\pi /2}/|q|,-e^{\pi /2})}(q)$
              is $o(1/(1+q))$ which proves part~(5). In the
                same way one proves the second claim of part~(6) as well:

                $$Z_{[e^{\pi /2},a]}(q)=Z_{[-a/|q|,-e^{\pi /2}/|q|]}(q)+B=
                  Z_{[-a/|q|,-e^{\pi /2}]}(q)-Z_{(-e^{\pi /2}/|q|,-e^{\pi /2}]}(q)+B~,$$
                      where $Z_{(-e^{\pi /2}/|q|,-e^{\pi /2}]}(q)=o(1/(1+q))$ and
            $$Z_{[-a/|q|,-e^{\pi /2}]}(q)=(\ln ((a/|q|)/e^{\pi /2})/2)/(1+q)=
                          (\ln (a/e^{\pi /2})/2)/(1+q)+o(1/(1+q))~.$$
                          The theorem is proved.
    
    \end{proof}

\end{document}